\newtheorem{Lemma}{Lemma}
\newtheorem{Theorem}{Theorem}
\newtheorem{Remark}{Remark}
\begin{document}
	\title{Skew generalized von Neumann-Jordan constant for the Bana\'s-Fr\k{a}czek space}
	\author[1]{Linhui Chen}
	\author[1]{Qi Liu\thanks{Qi Liu:liuq67@aqnu.edu.cn }}
	\author[1]{Xiewei Tan}
	\author[1]{Yuxin Wang}
	
	\affil{School of Mathematics and physics, Anqing Normal University, Anqing 246133,P.R.China}
	\maketitle 
	\begin{abstract}
		For any $\lambda>1$, $R_\lambda^2$ is Bana\'s-Fr\k{a}czek space, the exact value of the skew generalized von Neumann-Jordan constant $C^{p}_\mathrm{N J}(\xi,\eta,R_\lambda^2)$ is calculated. By careful calculations, $C_\mathrm{N J}^{p}(\xi,\eta,R_\lambda^2)=\frac{(\xi+\eta)^p+\left[(\eta+\xi)^2-\frac{4 \xi \eta}{\lambda^2}\right]^{p / 2}}{2^{p-1}\left(\xi^p+\eta^p\right)}$ is given.

	\end{abstract}
	\textbf{keywords:} {Banach spaces, geometric constants, Bana\'s-Fr\k{a}czek space}\\\textbf{Mathematics Subject Classification: }46B20
	
	\section{Introduction}
Throughout this article, let $X$ be a real Banach space with dimension at least 2. We will define $S_X$ as the unit sphere, that is $S_X=\left\{x \in X:\|x\|_X=1\right\}; B_X$ as the closed unit ball, that is $B_X=\left\{x \in X:\|x\|_X \leq 1\right\}$. Additionally, we will use $\operatorname{ex}\left(B_X\right)$ to indicate the collection of extreme points of $B_X$.
The geometric properties of Banach spaces are studied in terms of their geometric constants.

In retrospect,the von Neumann-Jordan(NJ) constant of a Banach space X was introduced by Clarkson \cite{01} as the minimum constant $C$ for which$$\frac{1}{C}\leq \frac{\vert\vert x+y\vert\vert^2+\vert\vert x-y\vert\vert^2}{2(\vert\vert x \vert\vert^2+\vert\vert y \vert\vert^2)} \leq C $$
holds for all $x,y$ $\in$ X with $(x,y)$ $\neq$ (0,0).

An equivalent definition of the von Neumann-Jordan(NJ) constant is found in \cite{02}:$$C_\mathrm{N J}(X)=\sup \bigg\{\frac{\vert\vert x+y \vert\vert^2+\vert\vert x-y \vert\vert^2}{2(\vert\vert x \vert\vert^2+\vert\vert y \vert\vert^2)}:x \in S_X,y \in B_X \bigg\}. $$

We know that\cite{04} the von Neumann-Jordan constant can be calculated by$$C_\mathrm{N J}(X)=\sup{\frac{\gamma_{X}(t)}{1+t^2}:0 \leq t \leq 1},$$ where $\gamma_{X}(t)=\sup \left\{\frac{ \vert\vert x+ty \vert\vert^2+\vert\vert x-ty \vert\vert^2}{2}:x,y \in S_X \right\}. $

The von Neumann-Jordan constant $C_{\mathrm{NJ}}(X)$ has been shown to be a useful tool for characterizing Hilbert spaces, uniformly non-square spaces, and super-reflexive spaces after a great deal of investigation by scholars. For additional information and findings about $C_{\mathrm{NJ}}(X)$, see \cite{05} \cite{06}.

While many scholars are studying the properties of the $C_{\mathrm{NJ}}(X)$ constant itself, other scholars have begun to generalize the constant. Thus, in \cite{03}, the generalized von Neumann-Jordan constant $C_\mathrm{N J}^{(p)}(X)$ is defined by$$C_{\mathrm{NJ}}^{(p)}(X)=\sup \left\{\frac{\|x+y\|^p+\|x-y\|^p}{2^{p-1}\left(\|x\|^p+\|y\|^p\right)}: x, y \in X,(x, y) \neq(0,0)\right\}.$$

Now let us collect some properties of this constant(see \cite{03}):\\
(i) $1 \leq C_\mathrm{N J}^{(p)}(X) \leq 2$;\\
(ii) X is uniformly non-square if and only if $C_\mathrm{N J}^{(p)}(X) <2$;\\
(iii) Let $L_r[0,1]$ denote the classical Lebesgue space $(1<r<\infty)$and let $1/r +1/r'=1,$ then $$C_\mathrm{N J}^{(p)}(L_{r}[0.1])=
\begin{cases}
	2^{2-p}, & 1<p\leq r;\\
	2^{p/r-p+1}, & r<p\leq r'.
\end{cases} $$

By using alternative geometric constants, we can better understand and analyze the characteristics of Banach spaces, which can give us information about the unique geometric characteristics of spaces. Therefore, it is beneficial for us to study the exact values of geometric constants in a specific space.
The authors presented the space $R_\lambda^2$ in \cite{08}, which is considered to be $R_\lambda^2:=\left(R^2,\|\cdot\|_\lambda\right)$, where $\lambda>1$ and

$$
\|(a, b)\|=\max \left\{\lambda|a|, \sqrt{a^2+b^2}\right\} .
$$

Naturally, a lot of academics are also fixated on figuring out how much this constant is in particular spaces. The authors then gave the precise value of $C_{\mathrm{NJ}}\left(R_\lambda^2\right)$ in \cite{07}, where $C_{\mathrm{NJ}}\left(R_\lambda^2\right)=2-1 / \lambda^2$.

Furthermore, the authors named the space as the Banaś-Frączek space.
The Banaś-Frączek space was generalized by C. Yang and X. Yang \cite{12} in 2016. $X_{\lambda, p}$ indicates this promotional space, which is thought to be where $\lambda>1$ and $p \geq 1, X_{\lambda, p}:=\left(R^2,\|\cdot\|_{\lambda, p}\right)$, of which $\|(a, b)\|_{\lambda, p}=\max \left\{\lambda|a|,\left(a^p+b^p\right)^{1/p}\right\}$, respectively.\\

The exact value of the $C_{\mathrm{NJ}}^{(p)}(X)$ constant for the Banaś-Frączek space, which was determined in 2018 by C. Yang and H. Li \cite{09} was presented by where $\lambda>1, p \geq 2$, for any $\lambda$ such that $\lambda^2\left(1-1 / \lambda^2\right)^{p/2} \geq 1, C_{\mathrm{NJ}}^{(p)}\left(R_\lambda^2\right)=1+\left(1-1 / \lambda^2\right)^{p/2}$. 

In a new research, Liu et al. \cite{10} introduced a new geometric constant with a skew connection that denotes to $L_{\mathrm{YJ}}(\xi, \eta, X)$ for $\xi, \eta>0$ as
$$
L_{\mathrm{YJ}}(\xi, \eta, X)=\sup \left\{\frac{\|\xi x+\eta y\|^2+\|\eta x-\xi y\|^2}{\left(\xi^2+\eta^2\right)\left(\|x\|^2+\|y\|^2\right)}: x, y \in X,(x, y) \neq(0,0)\right\}
$$
Additionally, the author\cite{11} provides a comparable constant
$$
L_{\mathrm{YJ}}^{\prime}(\xi, \eta, X)=\sup \left\{\frac{\|\xi x+\eta y\|^2+\|\eta x-\xi y\|^2}{2\left(\|x\|^2+\|y\|^2\right)}: x, y \in S_X\right\}
$$
In recent years, a few scholars began to conduct research on the $L_{\mathrm{YJ}}(\xi, \eta, X)$ constant. The authors \cite{14} compared the relationship between the $L_{\mathrm{YJ}}(\xi, \eta, X)$ constant and the $J_{\lambda, \mu}(X)$ constant, and established an inequality between these two constants. Furthermore, in the same year, the authors \cite{15} obtain the exact value of the $L_{\mathrm{YJ}}(\xi, \eta, X)$ constant and the $L_{\mathrm{YJ}}^{\prime}(\xi, \eta, X)$ constant for the regular octagon space.

The author introduce a skew generalized constant $C_\mathrm{N J}^p(\xi,\eta,X)$ \cite{13}based on the constant $L_\mathrm{YJ} (\xi, \eta, X)$ as follows: 
for $\xi,\eta>0$,
$$\begin{aligned}
	C_\mathrm{N J}^p(\xi,\eta,X)=\sup\left\{\frac{\|\xi x+\eta y\|^p+\|\eta x-\xi y\|^p}{2^{p-2}(\xi^p+\eta^p)(\|x\|^p+\|y\|^p)}:x,y\in X,(x,y)\neq(0,0)\right\}\\=\sup\left\{\frac{\|\xi x+\eta ty\|^{p}+\|\eta x-\xi ty\|^{p}}{2^{p-2}[(\xi^{p}+\eta^{p}t^{p})+(\eta^{p}+\xi^{p}t^{p})]}:x,y\in S_{X},0\leq t\leq1\right\},
\end{aligned}$$
where $1\leq p<\infty$.

The  Banaś-Frączek space have been studied widely in \cite{07,09}, some classical constants were calculated for these spaces. Now, in this article, we calculate and prove that for any $\lambda>1$ be such that $\lambda^2\left(1-1 / \lambda^2\right)^{p / 2} \geq 1$, $C_\mathrm{N J}^{p}(\xi,\eta,R_\lambda^2)=\frac{(\xi+\eta)^p+\left[(\eta+\xi)^2-\frac{4 \xi \eta}{\lambda^2}\right]^{p / 2}}{2^{p-1}\left(\xi^p+\eta^p\right)}$.

	\section{Main results}
	The following theorem is our main result.
	\begin{Theorem}\label{t1}
		Let $\lambda >1$ and $p \geq 2$ be such that $\lambda^2\left(1-1 / \lambda^2\right)^{p / 2} \geq 1$, then$$C_\mathrm{N J}^{p}(\xi,\eta,R_\lambda^2)=\frac{(\xi+\eta)^p+\left[(\eta+\xi)^2-\frac{4 \xi \eta}{\lambda^2}\right]^{p / 2}}{2^{p-1}\left(\xi^p+\eta^p\right)}.$$
	\end{Theorem}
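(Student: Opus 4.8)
The plan is to reduce the supremum defining $C_\mathrm{NJ}^p(\xi,\eta,R_\lambda^2)$ to a one-parameter optimization and then to identify the extremal configuration explicitly. Using the second expression in the definition, I would write the quantity to be maximized as
\[
f(x,y,t)=\frac{\|\xi x+\eta t y\|_\lambda^p+\|\eta x-\xi t y\|_\lambda^p}{2^{p-2}\bigl[(\xi^p+\eta^p t^p)+(\eta^p+\xi^p t^p)\bigr]},\qquad x,y\in S_{R_\lambda^2},\ 0\le t\le1.
\]
The first step is to observe that $\mathrm{ex}(B_{R_\lambda^2})$ and the structure of $\|\cdot\|_\lambda$ let us restrict $x,y$ to a convenient form: since the norm $\|(a,b)\|_\lambda=\max\{\lambda|a|,\sqrt{a^2+b^2}\}$ is symmetric under $b\mapsto -b$ and under the sign flips of $a$, and since the numerator is a supremum-type expression, I would argue (as in the cited computations of $C_\mathrm{NJ}(R_\lambda^2)$ in \cite{07} and $C_\mathrm{NJ}^{(p)}(R_\lambda^2)$ in \cite{09}) that the supremum is attained when $x$ and $y$ lie on the "circular" part of the sphere, i.e. $x=(\cos\alpha,\sin\alpha)$-type points with $|\cos\alpha|\le 1/\lambda$ so that $\|x\|_\lambda=1$ comes from the Euclidean piece, and similarly for $y$; moreover one expects the optimal $y$ to be "orthogonal" to $x$ in the Euclidean sense. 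This is exactly where the hypothesis $\lambda^2(1-1/\lambda^2)^{p/2}\ge 1$ enters: it guarantees that the candidate extremal vectors actually have their $\lambda$-norm governed by the Euclidean term rather than by the $\lambda|a|$ term, so the competing "flat" configurations do not beat the "round" one.

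Second, I would plug in the concrete extremal candidate. The natural guess, matching the claimed value, is to take $x=\frac{1}{\sqrt{2}}\bigl(\tfrac{1}{\lambda}\sqrt{?},\dots\bigr)$ — more precisely the pair realizing equality should be something like $x$ on the Euclidean circle with first coordinate as large as allowed and $y$ its Euclidean rotation by $90^\circ$, evaluated with $t=1$; then $\|\xi x+\eta y\|_\lambda$ reduces to $\sqrt{\xi^2+\eta^2}$ times a factor, while the cross term $\|\eta x-\xi y\|_\lambda^2$ picks up the correction $-4\xi\eta/\lambda^2$ coming from the fact that the $\lambda$-weighting on the first coordinate makes $\xi x+\eta y$ and $\eta x-\xi y$ no longer have equal $\lambda$-norm (in a Hilbert space both would equal $\xi^2+\eta^2$, giving the constant $1$; here one of them is stretched and the other compressed). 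Substituting and using $t=1$ collapses the denominator to $2^{p-1}(\xi^p+\eta^p)$, producing exactly
\[
\frac{(\xi+\eta)^p+\bigl[(\xi+\eta)^2-\tfrac{4\xi\eta}{\lambda^2}\bigr]^{p/2}}{2^{p-1}(\xi^p+\eta^p)},
\]
which establishes the lower bound $C_\mathrm{NJ}^p(\xi,\eta,R_\lambda^2)\ge$ the claimed value.

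Third, for the upper bound I would take arbitrary $x=(a_1,a_2)$, $y=(b_1,b_2)$ on $S_{R_\lambda^2}$ and $t\in[0,1]$ and bound $\|\xi x+\eta t y\|_\lambda^p+\|\eta x-\xi t y\|_\lambda^p$ from above. The key inequality is that for any $u=(u_1,u_2)$ one has $\|u\|_\lambda^2\le u_1^2+u_2^2 + (\lambda^2-1)u_1^2 = \lambda^2 u_1^2 + u_2^2$ trivially, but more usefully $\|u\|_\lambda^2 \le (\text{Euclidean})^2$ fails in general, so instead I would split into the two cases determined by which term achieves the max in each of the two norms appearing, and in each regime bound the sum by a smooth function of $a_1,b_1,t$ and the Euclidean norms; then one uses $a_1^2+a_2^2\ge \|x\|_\lambda^2/1$... — concretely, the $\lambda$-norm satisfies $|a_1|\le 1/\lambda$ and $a_1^2+a_2^2\le 1$ on the sphere, and dually $a_1^2+a_2^2\ge 1-(1-1/\lambda^2)$ when the Euclidean term is active. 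The cleanest route, following \cite{09}, is: show $\|\xi x+\eta ty\|_\lambda^2+\|\eta x-\xi ty\|_\lambda^2\le (\xi^2+\eta^2 t^2)+(\eta^2+\xi^2 t^2) + 2\xi\eta t\bigl[2|a_1b_1|(\lambda^2-1) \text{ or } 2\langle x,y\rangle\bigr]$ after expanding squares, then maximize the resulting expression over the admissible region (a product of the numerator of the $p=2$ case, handled via the elementary inequality $(A+B)^{p/2}\le A^{p/2}+B^{p/2}$-type estimates or convexity, to pass from $p=2$ to general $p\ge 2$), and finally optimize the single ratio over $t\in[0,1]$ to see the max occurs at $t=1$.

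The main obstacle I anticipate is the case analysis in the upper bound: the norm $\|\cdot\|_\lambda$ is a max of two expressions, so $\|\xi x+\eta ty\|_\lambda$ and $\|\eta x-\xi ty\|_\lambda$ each split into two cases, giving four regimes, and in each one must verify that the bound never exceeds the target value, with the hypothesis $\lambda^2(1-1/\lambda^2)^{p/2}\ge 1$ being exactly what rules out the otherwise-dangerous regime where both norms are governed by their first ($\lambda|a|$) coordinate. Passing from the quadratic ($p=2$) estimate to general $p\ge2$ also requires care: one cannot simply raise to the $p/2$ power term-by-term, so I would instead re-run the optimization with the $p$-th powers directly, exploiting that $s\mapsto s^{p/2}$ is convex and increasing to reduce to boundary/extreme configurations, and that $t\mapsto \frac{(\text{stuff})}{(\xi^p+\eta^pt^p)+(\eta^p+\xi^pt^p)}$ is monotone on $[0,1]$ so that $t=1$ is optimal — this monotonicity-in-$t$ claim is the other calculation I'd expect to need a careful derivative argument for.
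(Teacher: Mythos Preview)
Your outline follows the paper's two-step strategy (explicit lower bound, case analysis for the upper bound), but two concrete points would make the argument fail as written.

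\textbf{Lower bound: wrong extremal pair.} The pair realising the claimed value is \emph{not} $x$ together with its Euclidean $90^\circ$ rotation. If $x=(1/\lambda,\sqrt{1-1/\lambda^2})$ and you rotate by $90^\circ$ you get a vector whose first coordinate is $\pm\sqrt{1-1/\lambda^2}$; under the hypothesis (which forces $\lambda\ge\sqrt2$) this vector has $\lambda$-norm $\sqrt{\lambda^2-1}>1$, so it is not on $S_{R_\lambda^2}$ at all. The correct choice, used in the paper, is the \emph{reflection} $y=(1/\lambda,-\sqrt{1-1/\lambda^2})$. With $t=1$ one then gets $\|\xi x+\eta y\|_\lambda=\xi+\eta$ (the $\lambda|a|$ term wins) and $\|\eta x-\xi y\|_\lambda=\sqrt{(\xi+\eta)^2-4\xi\eta/\lambda^2}$ (the Euclidean term wins), which produces exactly the target value. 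Your heuristic ``orthogonal in the Euclidean sense'' is misleading here.

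\textbf{Upper bound: the $p=2$-to-$p$ reduction is in the wrong direction, and the hard case is misidentified.} You propose to bound the sum of squares first and then pass to $p$th powers via an inequality of the type $(A+B)^{p/2}\le A^{p/2}+B^{p/2}$. For $p\ge2$ the opposite inequality $(A+B)^{p/2}\ge A^{p/2}+B^{p/2}$ holds, so this step cannot give an upper bound. The paper does not attempt any such reduction: it works with $p$th powers from the start, and the substantive work is the \emph{mixed} case (one norm given by $\lambda|a|$, the other by the Euclidean part). That case is handled by two calculus lemmas bounding the function
\[
F(x_1,y_1)=\lambda^p(\xi x_1+\eta t y_1)^p+\bigl(\xi^2+\eta^2t^2-2\eta\xi t x_1y_1+2\eta\xi t\sqrt{1-x_1^2}\sqrt{1-y_1^2}\bigr)^{p/2}
\]
on $[0,1/\lambda]^2$ via interior critical-point analysis and boundary checks; the hypothesis $\lambda^2(1-1/\lambda^2)^{p/2}\ge1$ is used precisely to compare the resulting bounds with the target, not to dispose of the ``both $\lambda|a|$'' case (that case is in fact the easy one, giving ratio $\le1$). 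Your sketch does not contain the ingredient that replaces this lemma.
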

	To complete the proof the theorem, we first give the following lemmas.
	
	\begin{Lemma}\label{l1}
		Let $0 \leq t\leq1, \lambda>1, p \geq 2$ and $0 \leq y_1 \leq 1 / \lambda$.\\(i) If $$\begin{aligned}
			  \varphi\left(y_1\right)=&\xi^2+\eta^2 t^2-\frac{2 \eta \xi t y_1-\eta t y_1}{\lambda}+\frac{\xi}{\lambda^2}+\frac{\xi y_1 \sqrt{1-\lambda^2}}{\lambda \sqrt{1-y_1^2}}\\&+\frac{\eta t y_1^2 \sqrt{1-\lambda^{-2}}}{\sqrt{1-y_1^2}} 
			 +2 \eta \xi t \sqrt{1-\lambda^{-2}} \sqrt{1-y_1^2},
		\end{aligned}$$then 
		\begin{equation*}
			\varphi\left(y_1\right) \leq(\xi+\eta t)^2+\frac{2[\xi-(2\eta \xi-\eta)t]}{\lambda^2}.
		\end{equation*} \\(ii)If $$f\left(y_1\right)=\left(\xi+\eta t \lambda y_1\right)^p+\left[\xi^2+\eta^2 t^2-\frac{2 \eta \xi t y_1}{\lambda}+2 \eta \xi t \sqrt{1-\lambda^{-2}} \sqrt{1-y_1^2}\right]^{p / 2},$$then 
		\begin{align*}
			f(y_{1})\leq&\max\bigg\{
				\left(\xi+\eta t\right)^{p}+\left(\xi+\eta t\right)^{p-2}\cdot\frac{2\left[\xi-(2\eta \xi-n)t\right]}{\lambda^{2}};\xi^{2}+\left(\xi+\eta t\right)^{p};\\& \left(\xi+\eta t\right)^p+\left(\xi^{2}+\eta^{2}t^2-\frac{4\eta \xi t}{\lambda^{2}}+2\eta \xi t\right)^{p / 2}\bigg\}.
		\end{align*} 
	\end{Lemma}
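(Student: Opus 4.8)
The plan is to treat both parts of Lemma~\ref{l1} as single–variable estimates on the compact interval $[0,1/\lambda]$. These functions arise, in the proof of Theorem~\ref{t1}, by substituting the parametrization of the extreme points of $B_{R_\lambda^2}$ --- namely $(y_1,\pm\sqrt{1-y_1^2})$ with $0\le y_1\le 1/\lambda$ --- into the numerator $\|\xi x+\eta ty\|^p+\|\eta x-\xi ty\|^p$, with $x$ the ``corner'' extreme point $(1/\lambda,\sqrt{1-\lambda^{-2}})$, and by resolving the two branches hidden inside each $\max\{\lambda|a|,\sqrt{a^2+b^2}\}$. So no further geometry of the space is needed here: what remains is calculus together with careful case bookkeeping.

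For part (i), I would set $g(y_1):=(\xi+\eta t)^2+\tfrac{2[\xi-(2\eta\xi-\eta)t]}{\lambda^2}-\varphi(y_1)$ and prove $g\ge 0$ on $[0,1/\lambda]$. A direct computation --- using that at $y_1=1/\lambda$ one has $\lambda y_1=1$ and $\sqrt{1-\lambda^{-2}}\,\sqrt{1-y_1^2}=1-1/\lambda^2$ --- shows $g(1/\lambda)=0$, so it suffices to show $g$ is non-increasing, i.e.\ $\varphi'\ge 0$, on $[0,1/\lambda]$. Differentiating $\varphi$ and multiplying through by the positive factor $(1-y_1^2)^{3/2}$ clears the radicals and reduces the claim to the nonnegativity of an expression polynomial in $y_1$ with coefficients built from $\xi,\eta,t,\lambda$; this is then checked using $0\le t\le 1$, $\lambda>1$, $\xi,\eta>0$ and (crucially) $\lambda y_1\le 1$. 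Should monotonicity fail on part of the interval, the fallback is to isolate the unique interior critical point and compare its value with $g(0)$ and $g(1/\lambda)=0$, but I expect plain monotonicity to suffice.

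For part (ii), the engine is the elementary inequality
\begin{equation*}
a^p+b^p\le M^{p-2}\,(a^2+b^2)\qquad (p\ge 2,\ 0\le a,b\le M),
\end{equation*}
valid because $a^p=a^2a^{p-2}\le a^2M^{p-2}$ and likewise for $b$. I would write $f(y_1)=A^p+B^{p/2}$ with $A=\xi+\eta t\lambda y_1$ and $B$ the bracketed Euclidean term, and take $M=\xi+\eta t$. Then $A\le M$ because $\lambda y_1\le 1$, and $\sqrt{B}\le M$ because the cross-term coefficient $-\tfrac{y_1}{\lambda}+\sqrt{1-\lambda^{-2}}\sqrt{1-y_1^2}$ is the inner product of the unit vectors $(-1/\lambda,\sqrt{1-\lambda^{-2}})$ and $(y_1,\sqrt{1-y_1^2})$, hence at most $1$. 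Therefore $f(y_1)\le(\xi+\eta t)^{p-2}(A^2+B)$, and $A^2+B$ is exactly of the type controlled by part (i) (it coincides with $\varphi(y_1)$ for the active branch, or is dominated by it after the bookkeeping), so part (i) gives $f(y_1)\le(\xi+\eta t)^{p-2}\big[(\xi+\eta t)^2+\tfrac{2[\xi-(2\eta\xi-\eta)t]}{\lambda^2}\big]$, the first entry of the maximum. The two remaining configurations --- the other branch of $\max\{\lambda|a|,\sqrt{a^2+b^2}\}$ being active, and the two possible signs of $y_2$ --- produce the endpoint values $f(0)$ and $f(1/\lambda)$; invoking the normalization $\max\{\xi,\eta\}\le 1$ (harmless, since $C_\mathrm{N J}^{p}(\xi,\eta,X)$ is homogeneous of degree $0$ in $(\xi,\eta)$) and again the identity at $y_1=1/\lambda$, these simplify to $\le\xi^2+(\xi+\eta t)^p$ and to $(\xi+\eta t)^p+\big(\xi^2+\eta^2t^2-\tfrac{4\eta\xi t}{\lambda^2}+2\eta\xi t\big)^{p/2}$, the other two entries.

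I expect the main obstacle to be the case bookkeeping in part (ii): pinning down, as $y_1$ ranges over $[0,1/\lambda]$ and the sign of $y_2$ varies, which branch of each $\max$ is active on which sub-interval, and confirming that the three listed quantities genuinely exhaust all possibilities. The secondary difficulty is the derivative sign-check in part (i): after the radicals are cleared, the polynomial in $y_1$ is unwieldy enough that establishing its sign requires careful use of $\lambda y_1\le 1$ together with $0\le t\le 1$ and $\xi,\eta>0$.
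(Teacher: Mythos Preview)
Your plan for part (i) has a secondary issue and your plan for part (ii) has a genuine gap.

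\medskip

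\textbf{Part (i).} Your primary route is to show $\varphi'\ge 0$ on $[0,1/\lambda]$. This fails in general: with $\xi=2,\eta=1,t=1,\lambda=2$ one has $\varphi'(0)=\tfrac{\eta t-2\eta\xi t}{\lambda}+\tfrac{\xi\sqrt{1-\lambda^{-2}}}{\lambda}=-\tfrac{3}{2}+\tfrac{\sqrt{3}}{2}<0$, so $\varphi$ is not monotone. The paper instead checks that $\varphi'$ itself is increasing (each displayed summand in $\varphi'$ is visibly nondecreasing in $y_1$), i.e.\ $\varphi$ is convex, and therefore $\varphi(y_1)\le\max\{\varphi(0),\varphi(1/\lambda)\}=\varphi(1/\lambda)$. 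Your ``fallback'' of comparing values at the endpoints and at an interior critical point is essentially this convexity argument, but you should plan on it from the outset rather than on monotonicity.

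\medskip

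\textbf{Part (ii).} The identification ``$A^2+B$ coincides with $\varphi(y_1)$'' is wrong. Expand: $A^2+B=2\xi^2+\eta^2t^2(1+\lambda^2y_1^2)+2\xi\eta t\lambda y_1-\tfrac{2\eta\xi t y_1}{\lambda}+2\eta\xi t\sqrt{1-\lambda^{-2}}\sqrt{1-y_1^2}$, which bears no resemblance to the $\varphi$ of the statement (the latter has terms like $\xi/\lambda^2$ and $\xi y_1\sqrt{1-\lambda^{-2}}/(\lambda\sqrt{1-y_1^2})$). In fact $\varphi$ is not a quadratic combination at all: it is precisely what arises when one uses the \emph{first-order condition} $f'(y_0)=0$ at an interior maximizer. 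Differentiating $f$ and solving for $(\xi+\eta t\lambda y_0)^{p-1}$ gives
\[
(\xi+\eta t\lambda y_0)^{p-1}=\Bigl[\lambda^{-2}+\tfrac{y_0\sqrt{1-\lambda^{-2}}}{\lambda\sqrt{1-y_0^2}}\Bigr]B^{p/2-1},
\]
and substituting back yields $f(y_0)=B^{p/2-1}\varphi(y_0)$. Since $B\le(\xi+\eta t)^2$, part (i) then gives the first entry of the maximum; the second and third entries are exactly $f(0)$ (bounded by $\xi^2+(\xi+\eta t)^p$) and $f(1/\lambda)$.

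Your crude bound $a^p+b^p\le M^{p-2}(a^2+b^2)$ is genuinely too lossy here. For instance with $\xi=\eta=t=1$, $\lambda=\sqrt{2}$, $p=4$ one gets at $y_1=1/\lambda$ the value $(\xi+\eta t)^{p-2}(A^2+B)=4\cdot(4+2)=24$, whereas the three entries of the claimed maximum are $16$, $17$, and $20$. So no amount of bookkeeping will rescue this route; you need the critical-point identity that produces $\varphi$, which is the whole point of how part (i) feeds into part (ii).
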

	%\pagebreak

	\begin{proof}
		(i)since
		
		$$
		\varphi^{\prime}\left(y_1\right)=\frac{\eta t-2 \eta \xi t}{\lambda}+\frac{\xi \sqrt{1-\lambda^{-2}}}{\lambda} \cdot\left(1-y_1^2\right)^{-\frac{3}{2}}+\frac{\sqrt{\lambda^2-1}}{\lambda} n t y_1^3\left(1-y_1^2\right)^{-3 / 2}
		$$
			is an increasing function, we have $\varphi^{\prime \prime}(y_1) \geq 0$. Therefore,
		
		$$
		\varphi\left(y_1\right) \leq \max \{\varphi(0), \varphi(1 / \lambda) \}=\varphi(1 / \lambda)=(\xi+\eta t)^2+\frac{2[\xi-(2 \eta \xi-\eta) t]}{\lambda^2}.
		$$
		(ii)we may suppose $t \in(0,1]$. Let $y_0 \in(0,\frac{1}{\lambda})$ be such that $f^{\prime}\left(y_0\right)=0.$Then
		\begin{align*}
		&(\xi+\eta t \lambda y_0)^{p-1}\\&=\bigg[\lambda^{-2}+y_0 \sqrt{1-\lambda^{-2}} \cdot \lambda^{-1}\left(1-y_0^2\right)^{-\frac{1}{2}}\bigg]\left(\xi^2+\eta^2 t^2-\frac{2 \eta \xi t y_1}{\lambda}+2 \eta \xi t \sqrt{1-\lambda^{-2}} \sqrt{1-y_1^2}\right)^{p / 2-1}.
	\end{align*}
	Therefore	\begin{equation}\label{e1}
			\begin{aligned}f(y_{0})&=(\xi ^{2}+\eta ^{2}t^2-\frac{2\eta \xi ty_{0}}{\lambda}+2\eta \xi t\sqrt{1-\lambda^{-2}}\sqrt{1-y_{0}^{2}})^{p/2-1}\cdot\varphi(y_{0})\\&\leq(\xi +\eta t)^{p}+(\xi +\eta t)^{p-2}\cdot\frac{2[\xi -(2\eta \xi-\eta )t]}{\lambda^{2}}.\end{aligned}
		\end{equation}
		Now, (ii) is valid in view of $f(0) \leq \xi^2+(\xi+\eta t)^p, f(1 / \lambda)=(\xi+\eta t)^p+$ $\left(\xi^2+\eta^2 t^2-\frac{4 \eta \xi t}{\lambda^2}+2 \eta \xi t\right)^{p / 2}$ and \eqref{e1}.
	\end{proof}
	
	\begin{Lemma}\label{l2}
		Let p>2 and $\left(x_1, y_1\right) \in[0,1 / \lambda] \times[0,1 / \lambda]$ for some $\lambda>1$ such that $\lambda^2\left(1-1 / \lambda^2\right)^{p / 2} \geq 1$ If t $\in[0,1]$, then for
		\begin{equation}\begin{aligned}
				F\left(x_1, y_1\right)=&\lambda^p\left(\xi x_1+\eta t y_1\right)^p\\&+\left(\xi^2+\eta^2 t^2-2 \eta \xi t x_1 y_1+2 \eta \xi t \sqrt{1-x_1^2} \sqrt{1-y_1^2}\right)^{p / 2},
			\end{aligned}
	\end{equation}   
	we have \begin{equation}\label{e3}
	\begin{aligned}
		&\max\bigg\{F(x_1,y_1):0\leq x_{1},y_{1}\leq \frac{1}{\lambda}\bigg\}\\&\leq \max\bigg\{(1+\lambda^{-2p/(p-2)})(\xi+\eta t)^{p};\\&(\xi+\eta t)^{p}+(\xi^{2}+\eta^2 t^2-\frac{4\eta\xi t}{\lambda^{2}}+2\eta \xi t)^{p/2};\\
		&(\xi+\eta)^{p}+(\xi+\eta t)^{p-2}\cdot\frac{2[\xi-(2\eta\xi-\eta)t]}{\lambda^{2}}\bigg\}.
	\end{aligned}
		\end{equation}
	\end{Lemma}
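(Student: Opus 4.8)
\textbf{Proof proposal for Lemma \ref{l2}.}

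The plan is to reduce the two-variable maximization of $F(x_1,y_1)$ on the square $[0,1/\lambda]^2$ to the one-variable estimates already established in Lemma \ref{l1}. First I would fix $x_1$ and regard $F$ as a function of $y_1$ alone; the key observation is that after this reduction the expression matches (up to the harmless substitution $x_1 = \lambda y_1$ in the appropriate slot, or by symmetry of the cross-terms) the function $f(y_1)$ treated in Lemma \ref{l1}(ii), whose maximum over $[0,1/\lambda]$ is already bounded by the three-term maximum there. So for each fixed $x_1$ we obtain
\begin{equation*}
F(x_1,y_1) \leq \max\Bigl\{(\xi+\eta t)^p + (\xi+\eta t)^{p-2}\cdot\tfrac{2[\xi-(2\eta\xi-\eta)t]}{\lambda^2};\ \xi^2+(\xi+\eta t)^p;\ (\xi+\eta t)^p+\bigl(\xi^2+\eta^2t^2-\tfrac{4\eta\xi t}{\lambda^2}+2\eta\xi t\bigr)^{p/2}\Bigr\},
\end{equation*}
but with $\xi$ replaced by $\lambda\xi x_1$ wherever the first coordinate enters the leading power $\lambda^p(\xi x_1+\eta ty_1)^p$. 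The bulk of the work is bookkeeping: tracking which occurrences of the constants scale with $x_1$ and which do not.

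Next I would run the same argument in the other variable — fix $y_1$ and maximize in $x_1$ — or equivalently exploit the near-symmetry of $F$ under swapping $x_1 \leftrightarrow y_1$ together with $\xi \leftrightarrow \eta t$. Combining the two one-variable bounds pins the maximum of $F$ to one of a small number of corner/interior configurations: either an interior critical point (handled by the $f'(y_0)=0$ analysis as in \eqref{e1}), or one of the boundary points $x_1,y_1\in\{0,1/\lambda\}$. Evaluating $F$ at the four corners gives: at $(0,0)$ the value $(2\eta\xi t)^{p/2}$, at $(1/\lambda,1/\lambda)$ the value $(\xi+\eta t)^p + (\xi^2+\eta^2t^2-\tfrac{4\eta\xi t}{\lambda^2}+2\eta\xi t)^{p/2}$, and at the two mixed corners $(1/\lambda,0),(0,1/\lambda)$ expressions dominated by $\lambda^p\lambda^{-p}\cdot(\text{something})^p$ plus a root term. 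The hypothesis $\lambda^2(1-1/\lambda^2)^{p/2}\geq 1$ is exactly what is needed to compare these corner values against each other and against the first term $(1+\lambda^{-2p/(p-2)})(\xi+\eta t)^p$ on the right side of \eqref{e3}; I expect it enters when bounding the square-root term at a mixed corner by a multiple of $(\xi+\eta t)^p$.

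The main obstacle will be the interior-critical-point case: showing that any $(x_1,y_1)$ with $\nabla F = 0$ in the open square yields a value no larger than the right-hand side of \eqref{e3}. The strategy there mirrors Lemma \ref{l1}(ii): at a critical point one can solve the stationarity condition in $x_1$ for the factor $\lambda^p(\xi x_1+\eta ty_1)^{p-1}$, substitute back so that $F$ factors as $(\text{quadratic root term})^{p/2-1}$ times a $\varphi$-type expression, and then invoke the bound $\varphi(y_1)\leq (\xi+\eta t)^2 + \tfrac{2[\xi-(2\eta\xi-\eta)t]}{\lambda^2}$ from Lemma \ref{l1}(i). The delicate point is that $F$ is genuinely two-dimensional, so one stationarity equation alone does not collapse it; I would use the second equation (stationarity in $y_1$) to eliminate the remaining free variable, and here convexity/monotonicity in $y_1$ — via a sign analysis of the relevant second derivative, as in the $\varphi''(y_1)\geq 0$ step — forces the critical value to the boundary in that coordinate, reducing to the already-handled one-variable situation. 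Once every case is shown to be dominated by one of the three quantities on the right of \eqref{e3}, taking the maximum over all cases completes the proof.
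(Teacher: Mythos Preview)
Your plan has a genuine gap in the interior-critical-point case, and a related misreading of how Lemma~\ref{l1} enters.

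First, the reduction to Lemma~\ref{l1}(ii) for a \emph{general} fixed $x_1$ does not go through. The function $f(y_1)$ in Lemma~\ref{l1}(ii) is precisely $F(1/\lambda,y_1)$: note the factor $\sqrt{1-\lambda^{-2}}$ in the square-root term and the leading term $(\xi+\eta t\lambda y_1)^p$ with no extra $\lambda^p$ in front. For $x_1\neq 1/\lambda$ the two summands of $F(x_1,\cdot)$ depend on $x_1$ in incompatible ways --- the first through $\lambda^p(\xi x_1+\eta t y_1)^p$, the second through $\xi^2$ and $\sqrt{1-x_1^2}$ --- so there is no single substitution ``$\xi\mapsto\lambda\xi x_1$'' that turns $F(x_1,\cdot)$ into an instance of $f$. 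Lemma~\ref{l1} is used in the paper only on the boundary edges $x_1=1/\lambda$ and $y_1=1/\lambda$, not across the whole square.

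Second, and more importantly, you never explain where the bound $(1+\lambda^{-2p/(p-2)})(\xi+\eta t)^p$ comes from; you treat it as a benchmark against which to compare corner values. In fact this term is exactly the bound at an interior critical point, and it arises from a trick you are missing: writing out \emph{both} equations $F_{x_1}=0$ and $F_{y_1}=0$ and multiplying them (after cancelling the common factors) yields the clean algebraic relation
\[
\lambda^{p}(\xi x_1+\eta t y_1)^{p-2}=\Bigl(\xi^2+\eta^2 t^2-2\eta\xi t x_1 y_1+2\eta\xi t\sqrt{1-x_1^2}\sqrt{1-y_1^2}\Bigr)^{p/2-1},
\]
from which $\lambda^{p}(\xi x_1+\eta t y_1)^{p}=\lambda^{-2p/(p-2)}Q^{p/2}$ (with $Q$ the bracket) follows immediately, giving $F\le (1+\lambda^{-2p/(p-2)})Q^{p/2}\le(1+\lambda^{-2p/(p-2)})(\xi+\eta t)^p$. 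Your proposed route --- use one stationarity equation to factor $F$ as $Q^{p/2-1}\cdot\varphi$ and then push the remaining variable to the boundary by convexity --- would at best reproduce the third bound in \eqref{e3}, not the first, and leaves the interior case unaccounted for. The correct decomposition is: interior critical point $\Rightarrow$ first bound via the multiplicative combination of the two gradient equations; edges $x_1=0$ or $y_1=0$ $\Rightarrow$ second bound directly; edges $x_1=1/\lambda$ or $y_1=1/\lambda$ $\Rightarrow$ Lemma~\ref{l1}(ii), giving the second and third bounds.
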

	
	\begin{proof}Let $t \in(0,1]$, suppose  $\max \left\{F\left(x_1, y_1\right): 0 \leq x_1, y_1 \leq 1 / \lambda\right\}$ is attained at some $\{\left(x_1, y_1\right) \in(0,1 / \lambda) \times(0,1 / \lambda)\}$. 
		
		Then $F_{x_1}\left(x_1, y_1\right)=F_{y_1}\left(x_1, y_1\right)=0$. This implies
	\begin{equation}\label{e4}
			\begin{aligned}
				&  \lambda^p ( \xi x_1+\eta t y_1)^{p-1}-\eta t y_1\left[\xi^2+\eta^2 t^2-2 \eta \xi t x_1 y_1+2 \eta \xi t \sqrt{1-x_1^2} \sqrt{1-y_1^2}\right]^{p / 2-1} \\
				& =x_1 \eta t \frac{\sqrt{1-y_1^2}}{\sqrt{1-x_1^2}}\left(\xi^2+\eta^2 t^2-2 \eta \xi t x_1 y_1+2 \eta \xi t \sqrt{1-x_1^2} \sqrt{1-y_1^2}\right)^{p / 2-1} 
			\end{aligned}
	\end{equation}and
		\begin{equation}\label{e5}
		\begin{aligned}
			&\lambda^p\left(\xi x_1+\eta t y_1\right)^{p-1}-\xi x_1\left(\xi^2+\eta^2 t^2-2 \eta \xi t x_1 y_1+2 \eta \xi t \sqrt{1-x_1^2} \sqrt{1-y_1^2}\right)^{p / 2-1} \\
			& =y_1 \xi \frac{\sqrt{1-x_1^2}}{\sqrt{1-y_1^2}}\left(\xi^2+\eta^2 t^2-2 \eta \xi t x_1 y_1+2 \eta \xi t \sqrt{1-x_1^2} \sqrt{1-y_1^2}\right)^{p / 2-1}.
		\end{aligned}
		\end{equation}	
		Now, multiplying \eqref{e4} by \eqref{e5} , we get
		\begin{equation}
			\begin{aligned}
				 &\lambda^p\left(\xi x_1+\eta t y_1\right)^{p-2}\\&=\left(\xi^2+\eta^2 t^2-2 \eta \xi t x_1 y_1+2 \eta \xi t \sqrt{1-x_1^2} \sqrt{1-y_1^2}\right)^{p / 2-1}.
			\end{aligned}				
		\end{equation}
		Thus
		\begin{equation}
			\begin{aligned}
				&\lambda^p\left(\xi x_1+\eta t x_1\right)^p \\&\leq \lambda^{-2 p /\left(p-2\right)}\left(\xi^2+\eta^2 t^2-2 \eta \xi t x_1 y_1+2 \eta \xi t \sqrt{1-x_1^2} \sqrt{1-y_1^2}\right)^{p / 2},
			\end{aligned}
		\end{equation}
		which implies
		
		$$
		F\left(x_1, y_1\right) \leq\left(1+\lambda^{-2 p /\left(p-2\right)}\right)(\xi+\eta t)^{p}.
		$$

		To complete the proof of \eqref{e3}, we only need to prove \eqref{e8}-\eqref{e11} below.
		
		Firstly, we note that if $\lambda>1, p>2$ , $\lambda^2\left(1-1 / \lambda^2\right)^{p / 2} \geq 1$, then $\lambda \geq \sqrt{2}$ and
		\begin{equation}\label{e8}
			\begin{aligned}
				F\left(0, y_1\right) &\leq(\eta t)^p+(\xi+\eta t)^p \\&\leq \eta^p+(\xi+\eta t)^p \\&\leq(\xi+\eta t)^p+\left(\xi^2+\eta^2 t^2-\frac{4 \eta \xi t}{\lambda^2}+2 \eta \xi t\right)^{p / 2}.
			\end{aligned}
		\end{equation} 	
Secondly,we clearly have
		\begin{equation}
			\begin{aligned}
				F\left(x_1, 0\right)&=\lambda^p\left(\xi x_1\right)^p+\left[\xi^2+\eta^2 t^2+2 \eta \xi t \sqrt{1-x_1^2}\right]^{p / 2} \\&\leq \xi^p+(\xi+\eta t)^p\leq(\xi+\eta t)^p+\left(\xi^2+\eta^2 t^2-\frac{4 \xi \eta t}{\lambda^2}+2 \eta \xi t\right)^{p / 2}.
			\end{aligned}
		\end{equation}
Finally, from Lemma \ref{l2} and $\lambda \geq \sqrt{2}$, we also have
		\begin{equation}
				\begin{aligned}
				F \left(1/\lambda , y_1\right)&=\left(\xi+\eta t \lambda y_1\right)^p\\&+\left[\xi^2+\eta^2 t^2-\frac{2 \eta \xi t y_1}{\lambda}+2 \eta \xi t \sqrt{1-\lambda^{-2}} \sqrt{1-y_1^2}\right]^{p / 2} \\&
				\leq \max \left\{(\xi+\eta t)^p+\left(\xi^2+\eta^2 t^2-\frac{4 \eta \xi t}{\lambda^2}+2 \eta \xi t\right)^{p / 2} ;\right. \\&
				\left.(\xi+\eta t)^p+(\xi+\eta t)^{p-2} \cdot \frac{2 \cdot[\xi-(2 \eta \xi-\eta) t]}{\lambda^2}\right\}
			\end{aligned}
		\end{equation}
		and
	\small	\begin{equation}\label{e11}
		\begin{aligned}
		  F\left(x_1, 1/ \lambda\right)&=\left(\lambda \xi x_1+\eta t\right)^p\\&+\left(\xi^2+\eta^2 t^2-\frac{2 \eta \xi t x_1}{\lambda}+2 \eta \xi t\sqrt{1-x_1^2}\sqrt{1-\lambda^{-2}}\right)^{p / 2} \\
		& \leq\left(\xi+\eta t \lambda x_1\right)^p\\&+\left(\xi^2+\eta^2 t^2-\frac{2 \eta \xi t x_1}{\lambda}+2 \eta \xi t \sqrt{1-x_1^2} \sqrt{1-\lambda^{-2}}\right)^{p / 2} \\
			& \leq \max \left[(\xi+\eta t)^p+\left(\xi^2+\eta^2 t^2-\frac{4 \eta \xi t}{\lambda^2}+2 \eta \xi t\right)^{p / 2}\right. \text {; } \\
			& \left.(\xi+\eta t)^p+(\xi+\eta t)^{p-2} \cdot \frac{2-[\xi-(2\eta \xi-\eta) t]}{\lambda^2}\right\}.
		\end{aligned}
		\end{equation}
			\end{proof}
		\noindent{\bf	Proof of Theorem 1}
		Assume $\lambda>1$ is such that  $\lambda^2\left(1-1 / \lambda^2\right)^{p / 2} \geq 1$.
		Note that$$\mathrm{ex}(B_{X})=\{(z_1,z_2): z_1^2+z_2^2=1,|z_1|\leq1/\lambda\}.$$
		Now we prove that\begin{equation}
			C_\mathrm{N J}^{p}(\xi, \eta, R_\lambda^2) \leq \frac{(\xi+\eta)^p+\left[(\xi+\eta)^2-\frac{4 \xi \eta}{\lambda^2}\right]^{p / 2}}{2^{p-1}\left(\xi^p+\eta^p\right)} 
		\end{equation}
	 for all $x,y\in ex(B_X)$ is valid.
	 
	  Let $x=(x_1,x_1), y=(y_1,y_2)$. Now we consider the following  three cases:\\
		\textbf{Case(1)} If $$\vert\vert\xi x+\eta ty\vert\vert_{2}\leq\vert\lambda(\xi x_{1}+\eta ty_{1})\vert,\vert\vert\eta x-\xi ty\vert\vert_{2}\leq\vert\lambda(\eta x_{1}-\xi ty_1)\vert,$$ then
		$$\begin{aligned}
			& \vert\vert\xi x+\eta ty\vert\vert^{p}+\vert\vert\eta x-\xi ty_1\vert\vert^{p}=\lambda^{p}[\vert\xi x_{1}+\eta ty_{1}\vert^{p}+\vert\eta x_{1}-\xi ty_{1}\vert^{p}] \\
			& =2^{p-2}\lambda^{p}(\xi^{p}+\eta^{p})(x_{1}^{p}+t^{p}y_{1}^{p})\leq2^{p-2}(\xi^{p}+\eta^{p})(1+t^{p}).
		\end{aligned}$$
	\textbf{Case(2)} If$$\begin{aligned}
			& \|\xi x+\eta ty\|_{2}>|\lambda(\xi x_{1}+\eta ty_{1})|,\|\eta x-\xi ty\|_{2}>|\lambda(\eta x_{1}-\xi ty_{1})|,\end{aligned}$$ then$$\begin{aligned}
			\|\xi x+\eta ty||^{p}+||\eta x-\xi ty||^{p}&=\|\xi x+\eta ty\|_2^{p}+\|\eta x-\xi ty\|_{2}^{p} \\
			& \leq(\|\xi x+\eta ty\|_{2}^{2}+\|\eta x-\xi ty\|_{2}^{2})^{p/2}\\&=[(\xi^{2}+\eta^{2})(1+t^{2})]^{p/2}\\&\leq2^{p-2}(\xi^{p}+\eta^{p})(1+t^{p}).
		\end{aligned}$$
		\textbf{Case(3)} If
		$$\|\xi x+\eta ty\|_{2}\leq|\lambda(\xi x_{1}+\eta ty_{1})|~\text{and}~\|\eta x-\xi ty\|_{2}>|\lambda(\eta x_{1}-\xi ty_{1})|$$
		 or
	$$
			\|\xi x+\eta ty\|_{2}>|\lambda(\xi x_{1}+\eta ty_{1})|~\text{and}~\|\eta x-\xi ty\|_{2}\leq|\lambda(\eta x_{1}-\xi ty_{1})| 
		,$$
		then\begin{equation}
			\begin{aligned}
				& \vert\vert\xi x+\eta ty\vert\vert^{p}+\vert\vert\eta x-\xi ty\vert\vert^{p}\\&=\lambda^{p}|\xi x_{1}\pm \eta ty_{1}|^{p}+[(\xi x_{1}\mp \eta ty_{1})^{2}+(\xi x_{2}\mp \eta ty_{2})^{2}]^{p/2} \\
				& \leq\lambda^{p}|\xi x_{1}\pm \eta ty_{1}|^{p}+[\xi ^{2}+\eta ^{2}t^{2}\mp 2\eta \xi tx_{1}y_{1}+2\eta \xi t\sqrt{1-x_1^{2}}\sqrt{1-y_1^{2}}]^{p/2}.
			\end{aligned}
		\end{equation}
	Now,we may suppose that $x_1,y_1\in[0,1/\lambda]$. For
		\begin{align*}
			 F(x_1,y_{1})=\lambda^{p}|\xi x_{1}+\eta ty_{1}|^{p}+\bigg[\xi^{2}+\eta^{2}t^{2}-2\eta \xi tx_{1}y_{1}+2\eta \xi t\sqrt{1-x_{1}^{2}}\sqrt{1-y_{1}^{2}}\bigg]^{p/2},\end{align*} 
			We have \eqref{e1} by Lemma \ref{l2}.
		 
		 	Firstly, since $\lambda^{2 p /(p-2)} \geq \lambda^2$ and $\lambda^2\left(1-1 / \lambda^2\right)^{p / 2} \geq 1,$ we have
		$$\begin{aligned}
			\frac{\left(1+\lambda^{-2 p/(p-2)}\right)(\xi+\eta  t)^p}{\left.2^{p-2}\left(\xi^p+\eta^p\right) (1+t^p\right)}& \leq \frac{\left(1+\lambda^{-2}\right)(\xi+\eta  t)^p}{2^{p-2}\left(\xi^p+\eta^p\right)\left(1+t^p\right)} \\&\leq \frac{\left[1+\left(1-\frac{1}{\lambda^2}\right)^{p / 2}\right](\xi+\eta  t)^p}{2^{p-2}\left(\xi^p+\eta^p\right)\left(1+t^p\right)}\\&\leq \frac{(\xi+\eta t)^p+\left[\xi^2+\eta^2 t^2+2 \eta \xi t-\frac{4 \eta \xi t}{\lambda^2}\right]^{p / 2}}{2^{p-2}\left(\xi^p+\eta^p\right)\left(1+t^p\right) }.
		\end{aligned}$$
		
		Secondly,	since $t \in[0,1]$ and $\lambda^2\left(1-1 / \lambda^2\right)^{p / 2} \geq 1,$ we have $$\begin{aligned}
			 &\frac{(\xi+\eta t)^p+(\xi+\eta t)^{p-2} \cdot 2[\xi-(2 n \xi-\eta ) t] \lambda^{-2}}{2^{p-2}\left(\xi^p+\eta^p\right)\left(1+t^p\right)} \\&\leq \frac{(\xi+\eta t)^p+(\xi+\eta t)^p \cdot \lambda^{-2}}{2^{p-2}\left(\xi^p+\eta^p\right)\left(1+t^p\right)} \\		
			& \leq \frac{(\xi+\eta t)^p+\left[\xi^2+\eta^2 t^2+2 \eta \xi t-\frac{4 \eta \xi t}{\lambda^2}\right]^{p / 2}}{2^{p-2}\left(\xi^p+\eta^p\right)\left(1+t^p\right)}.
		\end{aligned}
		$$
		
		Thirdly, consider the following function:$$\Phi(t)=\frac{\left(\xi^2+\eta^2 t^2-\frac{4 \eta \xi t}{\lambda^2}+2 \eta \xi t\right)^{p / 2}}{2^{p-2}\left(\xi^p+\eta^p\right)\left(1+t^p\right)}.$$
		 It's easily to know for $\lambda \geq \sqrt{2}$, 
		$$\Phi(t)=\frac{\left(\xi^2+\eta^2 t^2-\frac{4 \eta \xi t}{\lambda^2}+2 \eta \xi t\right)^{p / 2}}{2^{p-2}\left(\xi^p+\eta^p\right)\left(1+t^p\right)}$$
		is increasing for $t \in[0,1]$. Hence we have
		$$
		\begin{aligned}
			  \frac{(\xi+\eta t)^p+\left(\xi^2+\eta^2 t^2-\frac{4 \eta \xi t}{\lambda^2}+2 \eta \xi t\right)^{p / 2}}{2^{p-2}\left(\xi^p+\eta^p\right)\left(1+t^p\right)} 
			\leq  \frac{(\xi+\eta)^p+\left[(\xi+\eta)^2-\frac{4 \xi \eta}{\lambda^2}\right]^{p / 2}}{2^{p-1}\left(\xi^p+\eta^p\right)}.
		\end{aligned}
		$$
	This implies that
		$$
 C_\mathrm{N J}^{p}(\xi, \eta, R_\lambda^2) \leq \frac{(\xi+\eta)^p+\left[(\xi+\eta)^2-\frac{4 \xi \eta}{\lambda^2}\right]^{p / 2}}{2^{p-1}\left(\xi^p+\eta^p\right)}.$$
 
 On the other hand, if we take     $x=\left(\frac{1}{\lambda}, \sqrt{1-\frac{1}{\lambda^2}}\right), \quad y=\left(\frac{1}{\lambda},-\sqrt{1-\frac{1}{\lambda^2}}\right)$, then we have
			$$\begin{aligned}
			C_\mathrm{N J}^{p}(\xi, \eta, R_\lambda^2) &\geq \frac{\left\|\xi x+\eta y\right\|^p+\left\|\eta x-\xi y\right\|^p }{2^{p-1}\left(\xi^p+\eta^p\right)}\\&=\frac{(\xi+\eta)^p+\left[(\xi+\eta)^2-\frac{4 \xi \eta}{\lambda ^2}\right]^{p/2}}{2^{p-1}\left(\xi^p+\eta^p\right)}.
			\end{aligned} $$
	    Hence we complete the proof of the above Theorem \ref{t1}.
	    
	   $C_\mathrm{NJ}(R^2_{\lambda})=2-1 / \lambda^2$ has given by Yang\cite{07} and $C_{\mathrm{NJ}}^{(p)}\left(R_\lambda^2\right)=1+\left(1-1 / \lambda^2\right)^{p/2}$ has also given by Yang \cite{09}. In fact, the exact value of the $C_\mathrm{NJ}(X)$ constant and the $C^{(p)}_\mathrm{NJ}(X)$ constant for the Bana\'s Fr\k{a}czek space can also estimated by Theorem\ref{t1}, just as the following remarks states.
	    \begin{Remark}\label{r1}
	    		For any $\lambda>1$, $R^2_{\lambda}$ is the Bana\'s Fr\k{a}czek space. Then$$C_\mathrm{NJ}(R^2_{\lambda})=C_\mathrm{N J}^{2}(1, 1, R_\lambda^2)=2-1 / \lambda^2.$$
	    \end{Remark}
	    \begin{Remark}
	    	For any $\lambda>1$, $R^2_{\lambda}$ is the Bana\'s Fr\k{a}czek space. Then$$C^{(p)}_\mathrm{NJ}(R^2_{\lambda})=C_\mathrm{N J}^{p}(1, 1, R_\lambda^2)=1+\left(1-1 / \lambda^2\right)^{p/2}.$$
	    \end{Remark}


\begin{thebibliography}{00}
	\bibitem{01} A. Clarkson,:\textit{The von Neumann–Jordan constant for the Lebesgue space.} Ann. of
	Math. \textbf{38} (1937), 114–115.
	\bibitem{02} M. Kato, L. Maligranda and Y. Takahashi,:\textit{On James and Jordan–von Neumann constants
		and the normal structure coefficient of Banach spaces.}  Studia Math. \textbf{144} (2001), no. 3,
		275–295.
	\bibitem{03} Y. Cui, W. Huang, H. Hudzik and R. Kaczmarek,:\textit{Generalized von Neumann–Jordan
		constant and its relationship to the fixed point property.} Fixed Point Theory Appl. \textbf{40}(2015), 1-11.
	\bibitem{04} C. Yang and F. Wang, :\textit{On a new geometric constant related to the von Neumann–Jordan
		constant.} J. Math. Anal. Appl. \textbf{324} (2006),555–565.
	\bibitem{05} Takahashi, Y and Kato, M ,:\textit{Von Neumam-Jordan constant and uniformly non-square Banach spaces.} 
	Nihonkai Math. J.(1998), 155-169. 
	\bibitem{06} X. Yang and C. Yang ,:\textit{An equality between $L_{YJ} (\lambda, \mu, X)$ constant and generalized James constant,}  Math.
	Inequal. Appl.\textbf{27}, 3 (2024), 571-581.
	\bibitem{07} C. Yang and Q. Xu,:\textit{Jordan-von Neumann constant for Bana\'s-Fr\k{a}czek space,}  Banach J. Math. Anal.\textbf{2}, 8
		(2014), 185-192. 
	\bibitem{08} Bana\'s . J and Fr\k{a}czek. K,:\textit{Deformation of Banach spaces,}  Comment. Math. Univ. Carolinae, \textbf{34}, 1 (1993),
	47-53.
	\bibitem{09} C. Yang, H. Li,\textit{ Generalized von Neumann-Jordan for the Bana\'s-Fr\k{a}czek space,} Collo. Math., \textbf{154}, 1(2018).
	\bibitem{10} Q. Liu and Y. Li ,\textit{On a New Geometric Constant Related to the Euler-Lagrange Type Identity in Banach Spaces,} 
	Math.,\textbf{9} , 2 (2021), 116-128.
	\bibitem{11} Q. Liu, C. Zhou, Sarfraz M, Y.Li, \textit{On new moduli related to the generalization of the Parallelogram law,}
	Bulletin of the Malaysian Mathematical Sciences Society., 45(2022), 307-321.
	\bibitem{12} C. Yang and X. Yang, \textit{On the James type constant and von Neumann-Jordan constant for a class of Bana\'s-Fr\k{a}czek type spaces,} J. Math. Inequal.,\textbf{                                                                                                                                                                                                                                                                                                                          10}, 2 (2016), 551-558.
	\bibitem{13}Q, Ni., Q, Liu., Y, Zhou.: \textit{Skew generalized von Neumann-Jordan constant in
		Banach Spaces,} Hacet. J. Math. Stat. (2024), 1-9.
	\bibitem{14} X. Yang and C. Yang ,: \textit{An equality between $L_{YJ}(\lambda, \mu, X)$ constant and generalized James constant,} Math.
	Inequal. Appl., \textbf{27}, 3 (2024), 571-581.
	\bibitem{15} X. Yang, Y. Li and C. Yang , \textit{On the $L_{YJ}(\lambda, \mu, X)$ constant for the regular octagon space,} Filomat., \textbf{38}, 5
	(2024), 1583-1593.
\end{thebibliography}
							\end{document}